\newtheorem{theorem}{Theorem}
\newtheorem{lemma}{Lemma}
\newtheorem{corollary}{Corollary}
\theoremstyle{definition}
\begin{document}
\title[Order boundedness of weighted composition operators]%
{Order boundedness of weighted composition operators on weighted Dirichlet spaces and derivative Hardy spaces}

\author{Qingze Lin, Junming Liu*, Yutian Wu}
\thanks{*Corresponding author}

\address{School of Applied Mathematics, Guangdong University of Technology, Guangzhou, Guangdong, 510520, P.~R.~China}\email{gdlqz@e.gzhu.edu.cn}

\address{School of Applied Mathematics, Guangdong University of Technology, Guangzhou, Guangdong, 510520, P.~R.~China}\email{jmliu@gdut.edu.cn}

\address{School of Financial Mathematics \& Statistics, Guangdong University of Finance, Guangzhou, Guangdong, 510521, P.~R.~China}\email{26-080@gduf.edu.cn}

\begin{abstract}
In this paper, we completely characterize the order boundedness of weighted composition operators between different weighted Dirichlet spaces and different derivative Hardy spaces.
\end{abstract}
\thanks{This work was supported by NNSF of China (Grant No. 11801094).}
\keywords{order boundedness, weighted composition operator, Dirichlet space, Hardy space} \subjclass[2010]{47B33, 30H05}

\maketitle

\section{\bf Introduction}
Let $\mathbb{D}$ be the unit disk of a complex plane $\mathbb{C}$ and $H(\mathbb D)$ the space consisting of all the analytic functions on $\mathbb{D}$. For $0<p<\infty$, the Hardy space $H^{p}$ is the space of functions $f\in H(\mathbb D)$ for which
$$ \|f\|_{H^{p}}:=\left(\sup_{0\leq r<1}\int_{\partial\mathbb{D}}|f(r\xi)|^{p}dm(\xi)\right)^{1/p}<\infty\,,$$
where $m$ is the normalized Lebesgue measure on $\partial\mathbb{D}$. It is known that this norm is equal to the following norm:
$$\|f\|_{H^{p}}=\left(\int_{\partial\mathbb{D}}|f(\xi)|^{p}dm(\xi)\right)^{1/p}\,,$$
where for any $\xi\in\partial\mathbb{D}$, $f(\xi)$ is the radial limit which exists almost everywhere (see \cite[Theorem~2.6]{DUREN}).

For $p=\infty$, the space $H^{\infty}$ is defined by
$$H^{\infty}=\{f\in H(\mathbb D):\ \|f\|_{\infty}:=\ \sup_{z\in \mathbb D}\{|f(z)|\}<\infty\}\,.$$

We define the weighted composition operator $W_{\phi,\varphi}$ for $f\in H(\mathbb D)$ by
$$W_{\phi,\varphi}(f)(z)=\phi(z)f\circ\varphi(z)\,,\qquad z\in \mathbb D\,,$$
where $\phi\in H(\mathbb D)$ and $\varphi$ is an analytic self-map of $\mathbb D$.
If $\phi=1$, $W_{\phi,\varphi}$ becomes the composition operator $C_\varphi$ while
if $\varphi(z)\equiv z$, $W_{\phi,\varphi}$ becomes the multiplication operator $M_\phi$.

We define the derivative Hardy space $S^p$ by
$$S^p=\{f\in H(\mathbb{D}): \|f\|_{S^{p}}:=|f(0)|+\|f'\|_{H^{p}}<\infty\}.$$
For $1\leq p\leq\infty$, $S^p$ is a Banach algebra and there is an inclusion relation: $S^p\subset H^\infty$ (for the detail structures of $S^p$ spaces, see \cite{ZCBP,ZCBP1,DUREN,LIN,LIN1} and references therein).

Roan~\cite{RR} started the investigation of composition operators $C_{\varphi}$ on the space $S^p$. After his work, MacCluer \cite{BDM} gave the characterizations of the boundedness and the compactness of the composition operators $C_{\varphi}$ on the space $S^p$ in terms of Carleson measures. A remarkable result on the boundedness and the compactness of the weighted composition operators $W_{\phi,\varphi}$ on $S^p$ was obtained in \cite{CH}, in which they were both characterized through the corresponding weighted composition operators $W_{\phi\varphi',\varphi}$ on $H^p$. What's more, the isometries between $S^p$ was obtained by Novinger and Oberlin in \cite{NO}, in which they showed that the isometries were closely related to the weighted composition operators.

For $0<p<\infty,-1<\alpha,$ the weighted Bergman space $A_{\alpha}^p$ on the unit disk $\mathbb{D}$ consists of all the functions $f\in H(\mathbb D)$ such that
$$\|f\|_{A_{\alpha}^p}=\left(\int_{\mathbb{D}}|f(z)|^{p}(1-|z|^2)^\alpha dA(z)\right)^{1/p}<\infty\,,$$
where $dA(z)=\frac{1}{\pi}dxdy$ is the normalized Lebesgue area measure (see \cite{DS,HKZ} for references). Then, the weighted Dirichlet space $D_{\alpha}^p$ on $\mathbb{D}$ consists of all the functions $f\in H(\mathbb D)$ satisfying
$$\|f\|_{D_{\alpha}^p}=\left(|f(0)|^p+\int_{\mathbb{D}}|f'(z)|^{p}(1-|z|^2)^\alpha dA(z)\right)^{1/p}<\infty\,.$$

For $0<p<q<\infty$ and $-1<\alpha$, Girela and Pal\'{a}ez \cite{GP} gave the complete characterizations of the Carleson measures of $D_{\alpha}^p$\,. However, for the case of $p\geq q$, the corresponding characterizations were partly investigated in \cite{GGP,wu}, where several questions were still open. Base on their works and inspired by the ideas from \cite{CH}, Kumar \cite{Kumar} obtained the characterizations for the boundedness and compactness of weighted composition operators $W_{\phi,\varphi}$ between different weighted Dirichlet spaces.

Let $X$ be a Banach space of holomorphic functions defined on $\mathbb{D}$\,, $q>0$\,, $(\Omega,\mathcal{A},\mu)$ a measure space and
$$L^p(\Omega,\mathcal{A},\mu):=\{f|\ f:\Omega\to\mathbb{C} \text{ is measurable and } \int_{\Omega}|f|^pd\mu<\infty\}\,.$$
An operator $T:X\to L^p(\Omega,\mathcal{A},\mu)$ is said to be order bounded if there exists $g\in L^p(\Omega,\mathcal{A},\mu)$ such that for all $f\in X$ with $\|f\|_X\leq 1$, it holds that
$$|T(f)(x)|\leq g(x)\,, \quad \text{ a.e. } [\mu]\,.$$

Order boundedness plays an important role in studying the properties of many concrete operators acting between Banach spaces like Hardy spaces, weighted Bergman spaces and so forth (see \cite{TD,RH,HJ,SU}). For example, Hunziker and Jarchow \cite{HJ} showed that for $1\leq p\leq q<\infty$, if $C_\varphi: H^p\to L^q(m)$ is order bounded, then $C_\varphi: H^p\to H^q$ must be compact.

Recently,  Sharma et al. \cite{SS} studied the order bounded difference of weighted composition operators between Hardy spaces while Acharyya et al. \cite{AF} investigate the sums of weighted differentiation composition operators acting between weighted Bergman spaces.

Order boundedness of weighted composition operators $W_{\phi,\varphi}$ between spaces $D^p_{p-1}$ and $D^q_{q-1}$ were studied in \cite{GKZ,AS}\,. In this paper, we first extend their results to weighted composition operators between acting between different weighted Dirichlet spaces which cover all cases. Then we investigate the order boundedness of weighted composition operators between different derivative Hardy spaces.

\section{\bf Order boundedness of weighted composition operators on weighted Dirichlet spaces}
Recall that in this case, the weighted composition operator $W_{\phi,\varphi}:D_{\alpha}^p\rightarrow D_{\beta}^q$ is order bounded if and only if there exists $g\in L^q(A_{\beta})$ such that for all $f\in D_{\alpha}^p$ with $\|f\|_{D_{\alpha}^p}\leq 1$, it holds that
$$|(W_{\phi,\varphi}f)'(z)|\leq g(z)\,, \quad \text{ a.e. } [A_{\beta}]\,.$$

Before proving the main results, we first give some auxiliary lemmas.

\begin{lemma}\label{le1}
Let $\alpha>-1$ and $0<p<\infty$. Denote $\delta_z$ as the point evaluation functional on $D_{\alpha}^p$, then

(1) for $p<\alpha+2$,\quad $\|\delta_z\|\approx \frac{1}{(1-|z|^2)^{(\alpha+2-p)/p}}$;

(2) for $p=\alpha+2$,\quad $\|\delta_z\|\approx \frac{1}{\left(\log(\frac{2}{1-|z|^2})\right)^{(1-p)/p}}$;

(3) for $p>\alpha+2$,\quad $\|\delta_z\|\approx 1$\,.
\end{lemma}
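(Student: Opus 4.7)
The plan is to establish matching upper and lower bounds for $\|\delta_z\|$ in each of the three regimes. For the upper bound I begin with the fundamental theorem of calculus along the radial path,
$$f(z) = f(0) + z\int_0^1 f'(tz)\,dt\,,$$
and apply the classical pointwise estimate $|g(w)|\leq C\|g\|_{A_\alpha^p}(1-|w|^2)^{-(\alpha+2)/p}$ for $g\in A_\alpha^p$ to $g = f'$. This reduces the question to the scalar integral $\int_0^1(1-t^2|z|^2)^{-(\alpha+2)/p}\,dt$, whose behaviour as $|z|\to 1$ depends entirely on whether $(\alpha+2)/p$ exceeds, equals, or falls below $1$. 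In Case (1) the integral is of order $(1-|z|^2)^{(p-\alpha-2)/p}$ and in Case (3) it is uniformly bounded, so this direct argument immediately yields the upper bounds claimed in those two regimes.

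The critical case $p=\alpha+2$ demands a sharper argument, since the naive insertion above produces only the first power of the logarithm rather than the correct exponent $(p-1)/p$. Here I would replace the pointwise bound with the subharmonic mean-value estimate on a pseudo-hyperbolic disk $\Delta(w,r)$ of fixed radius,
$$|f'(w)|^p\leq\frac{C}{(1-|w|^2)^{\alpha+2}}\int_{\Delta(w,r)}|f'(\zeta)|^p(1-|\zeta|^2)^\alpha\,dA(\zeta)\,,$$
take the $p$-th root (the constraint $\alpha>-1$ forces $p=\alpha+2>1$, so Hölder is available), and then apply Hölder's inequality with conjugate exponents $p$ and $p/(p-1)$ to the integral in $t$. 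The Hölder partner evaluates to $(\int_0^1(1-t|z|)^{-1}\,dt)^{(p-1)/p}\approx(\log(2/(1-|z|^2)))^{(p-1)/p}$, and a Fubini argument—using that each $\zeta\in\mathbb{D}$ lies in $\Delta(tz,r)$ only for an interval of $t$'s of length comparable to $(1-|\zeta|)$ on which $1-t|z|\approx 1-|\zeta|$—dominates the remaining factor by $\|f'\|_{A_\alpha^p}$.

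For the matching lower bounds I would exhibit explicit extremizers. In Case (1), take $f_a(w)=\int_0^w(1-\bar a\zeta)^{-\gamma}\,d\zeta$ with $\gamma>(\alpha+2)/p$; the Forelli--Rudin estimate gives $\|f_a'\|_{A_\alpha^p}^p\approx(1-|a|^2)^{\alpha+2-p\gamma}$ and direct parametric integration gives $|f_a(a)|\approx(1-|a|^2)^{1-\gamma}$, whose ratio is exactly $(1-|a|^2)^{-(\alpha+2-p)/p}$. In Case (2), take $f_a(w)=\log(1/(1-\bar a w))$, so $f_a'(w)=\bar a/(1-\bar a w)$; the critical Forelli--Rudin integral yields $\|f_a'\|_{A_\alpha^p}^p\approx\log(2/(1-|a|^2))\approx|f_a(a)|$, and the ratio produces the claimed logarithmic power. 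In Case (3), the constant function $f\equiv 1$ already shows $\|\delta_z\|\geq 1$. The main obstacle is the critical Case (2): the sharp exponent $(p-1)/p$ of the logarithm is delicate because the pointwise $A_\alpha^p$ bound overshoots, so the combined subharmonic-mean and Hölder argument is unavoidable.
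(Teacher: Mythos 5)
Your argument is correct, but it is a genuinely different route from the paper's: the authors dispose of the lemma in one line by citing Lemmas~2.2 and~2.3 of Gao--Kumar--Zhou \cite{GKZ} for parts (1) and (2) and the embedding $D_{\alpha}^p\subset H^{\infty}$ (for $p>\alpha+2$, from \cite{wu}) for part (3), whereas you reprove everything from scratch. Your upper bounds via $f(z)=f(0)+z\int_0^1 f'(tz)\,dt$ together with the pointwise $A_\alpha^p$ estimate handle (1) and (3) cleanly, and you correctly identify that the critical case $p=\alpha+2$ cannot be obtained this way (the naive bound gives the full power of the logarithm instead of $(p-1)/p$); your fix --- splitting $(1-t|z|)^{-1}$ between the two H\"older factors, so that the Fubini/overlap count $|\{t:\zeta\in\Delta(tz,r)\}|\lesssim 1-|\zeta|$ cancels against $1-t|z|\approx 1-|\zeta|$ and leaves $\|f'\|_{A_\alpha^p}^p$ --- is exactly the standard mechanism behind the sharp logarithmic exponent, and the hypothesis $\alpha>-1$ does guarantee $p=\alpha+2>1$ so H\"older applies. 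The test functions and Forelli--Rudin computations for the lower bounds all check out (the exponents cancel to give precisely $(1-|a|^2)^{-(\alpha+2-p)/p}$ in case (1) and $(\log\frac{2}{1-|a|^2})^{(p-1)/p}$ in case (2)); the only cosmetic omission is that the extremizers degenerate for $|a|$ near $0$, where one should fall back on the constant function, a triviality since both sides of each asymptotic are bounded above and below on compact subsets. What your approach buys is a self-contained proof independent of \cite{GKZ}; what the paper's citation buys is brevity. Note also that your subharmonic-mean-value argument in case (2) is essentially the content of the cited lemma in \cite{GKZ}, so the two routes converge at the technical core.
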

\begin{proof}
(1) and (2) follows from \cite[Lemma~2.2 and Lemma~2.3]{GKZ} while (3) follows directly from the fact that $D_{\alpha}^p\subset H^\infty$ for $p>\alpha+2$ (see \cite{wu})\,.
\end{proof}

\begin{lemma}\label{le2}
Let $\alpha>-1$ and $0<p<\infty$. Denote $\delta'_z$ as the derivative point evaluation functional on $D_{\alpha}^p$, then
$\|\delta'_z\|\approx \frac{1}{(1-|z|^2)^{(\alpha+2)/p}}\,.$
\end{lemma}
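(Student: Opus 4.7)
The plan is to reduce the problem to the classical point-evaluation estimate on the weighted Bergman space $A_\alpha^p$ by exploiting the isometric correspondence $f\mapsto f'$ between $\{f\in D_\alpha^p:f(0)=0\}$ and $A_\alpha^p$. Set $X:=\{f\in D_\alpha^p:f(0)=0\}$; on $X$ the norm simplifies to $\|f\|_X=\|f'\|_{A_\alpha^p}$, so differentiation is an isometric isomorphism from $X$ onto $A_\alpha^p$ (with inverse $h\mapsto\int_0^z h(w)\,dw$). For any $f\in D_\alpha^p$, the function $g:=f-f(0)$ lies in $X$ with $g'(z)=f'(z)$ and $\|g\|_X\le\|f\|_{D_\alpha^p}$, so taking suprema gives
\[
\|\delta'_z\|_{(D_\alpha^p)^*} \;=\; \sup_{\|f\|_{D_\alpha^p}\le 1}|f'(z)| \;=\; \sup_{\|h\|_{A_\alpha^p}\le 1}|h(z)| \;=\; \|\mathrm{ev}_z\|_{(A_\alpha^p)^*}.
\]

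It then suffices to apply the classical estimate $\|\mathrm{ev}_z\|_{(A_\alpha^p)^*}\approx(1-|z|^2)^{-(\alpha+2)/p}$. For a self-contained argument, the upper bound is the standard subharmonicity-of-$|h|^p$ estimate on pseudo-hyperbolic disks, and the lower bound comes from the test functions $f_w(z)=(1-|w|^2)^b(1-\bar w z)^{-c}$ with $c>(\alpha+2)/p-1$ and $b:=c+1-(\alpha+2)/p$. The classical integral identity
\[
\int_{\mathbb{D}}\frac{(1-|z|^2)^\alpha}{|1-\bar w z|^s}\,dA(z)\;\approx\;(1-|w|^2)^{\alpha+2-s},\qquad s>\alpha+2,
\]
then yields $\|f_w\|_{D_\alpha^p}\approx 1$, while a direct computation gives $|f_w'(w)|=c|w|(1-|w|^2)^{-(\alpha+2)/p}$; this handles $|w|\ge 1/2$, and the regime $|w|<1/2$ is trivial via the test $f(z)=z$.

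There is no serious obstacle: the proof is essentially the clean reduction in the first paragraph. The only bookkeeping is checking the exponent constraints $c>(\alpha+2)/p-1$ and $b>0$ so that the integral identity is applicable, and ensuring the constants in $\approx$ are uniform in $z\in\mathbb{D}$.
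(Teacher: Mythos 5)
Your proposal is correct and follows essentially the same route as the paper: the paper's proof is precisely the observation that $f\in D_\alpha^p$ iff $f'\in A_\alpha^p$, reducing $\|\delta'_z\|$ to the point-evaluation norm on $A_\alpha^p$, which it then quotes from Hedenmalm--Korenblum--Zhu. Your additional self-contained verification of the Bergman estimate (subharmonicity for the upper bound, the test functions $f_w$ with the exponent bookkeeping $b=c+1-(\alpha+2)/p$ for the lower bound) is sound but goes beyond what the paper writes down.
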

\begin{proof}
By definition, $f\in D_{\alpha}^p$ if and only if $f'\in A_{\alpha}^p$, thus the lemma follows from \cite[Lemma~3.2]{HKZ}
\end{proof}

The following theorem completely characterizes the order boundedness of weighted composition operators $W_{\phi,\varphi}:D_{\alpha}^p\rightarrow D_{\beta}^q$\,.
\begin{theorem}\label{th1}
Let $-1<\alpha,\beta$, $0<p,q<\infty$, $\phi\in H(\mathbb D)$ and $\varphi$ is an analytic self-map of $\mathbb D$\,. Then the following statements hold:\\
\phantom{(1)}(1) If $p<\alpha+2$, then $W_{\phi,\varphi}:D_{\alpha}^p\rightarrow D_{\beta}^q$ is order bounded if and only if
$$\int_{\mathbb{D}}\frac{|\phi(z)\varphi'(z)|^q}{(1-|\varphi(z)|^2)^{q(\alpha+2)/p}}dA_{\beta}(z)+\int_{\mathbb{D}}\frac{|\phi'(z)|^q}{(1-|\varphi(z)|^2)^{q(\alpha+2-p)/p}}dA_{\beta}(z)<\infty\,;$$
\phantom{(1)}(2) If $p=\alpha+2$, then $W_{\phi,\varphi}:D_{\alpha}^p\rightarrow D_{\beta}^q$ is order bounded if and only if $$\int_{\mathbb{D}}\frac{|\phi(z)\varphi'(z)|^q}{(1-|\varphi(z)|^2)^{q}}dA_{\beta}(z)+\int_{\mathbb{D}}\frac{|\phi'(z)|^q}{\left(\log(\frac{2}{1-|\varphi(z)|^2})\right)^{q(1-p)/p}}dA_{\beta}(z)<\infty\,;$$
\phantom{(1)}(3) If $p>\alpha+2$, then $W_{\phi,\varphi}:D_{\alpha}^p\rightarrow D_{\beta}^q$ is order bounded if and only if $\phi\in D_{\beta}^q$ and
$$\int_{\mathbb{D}}\frac{|\phi(z)\varphi'(z)|^q}{(1-|\varphi(z)|^2)^{q(\alpha+2)/p}}dA_{\beta}(z)<\infty\,.$$
\end{theorem}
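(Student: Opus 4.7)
The plan is to exploit the product-rule identity
$$(W_{\phi,\varphi}f)'(z) = \phi'(z)\,f(\varphi(z)) + \phi(z)\varphi'(z)\,f'(\varphi(z))\,,$$
to reduce the problem to a two-sided pointwise estimate of a supremum functional, and then substitute Lemmas~\ref{le1} and~\ref{le2}. Specifically, since $D_\alpha^p$ is separable, I would first argue that order boundedness is equivalent to membership in $L^q(A_\beta)$ of the measurable function
$$H(z) := \sup_{\|f\|_{D_\alpha^p}\le 1}\bigl|\phi'(z)\,f(\varphi(z)) + \phi(z)\varphi'(z)\,f'(\varphi(z))\bigr|\,,$$
so that the whole theorem reduces to establishing the two-sided estimate
$$H(z)\asymp |\phi'(z)|\,\|\delta_{\varphi(z)}\| + |\phi(z)\varphi'(z)|\,\|\delta'_{\varphi(z)}\|\,.$$

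The upper bound $H(z)\le |\phi'(z)|\|\delta_{\varphi(z)}\| + |\phi(z)\varphi'(z)|\|\delta'_{\varphi(z)}\|$ will be immediate from the definitions of the two functional norms, and inserting the three regime-dependent asymptotics of Lemma~\ref{le1} together with Lemma~\ref{le2} will identify $\int H^q\,dA_\beta$, up to constants, with the sum of integrals of (1), (2), or (3). In case (3) the first summand collapses to $|\phi'(z)|$ because $\|\delta_w\|\asymp 1$, so demanding it to lie in $L^q(A_\beta)$ will be exactly the condition $\phi\in D_\beta^q$, explaining its separate appearance in the statement.

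For the reverse inequality I set $L_1:=\phi'(z)\|\delta_{\varphi(z)}\|$ and $L_2:=\phi(z)\varphi'(z)\|\delta'_{\varphi(z)}\|$ and plan to produce, for each $w\in\mathbb D$, two unit-norm test functions $f_w^{(1)}, f_w^{(2)}\in D_\alpha^p$ whose normalized evaluation pairs
$$(a_i,b_i):=\bigl(f_w^{(i)}(w)/\|\delta_w\|,\,(f_w^{(i)})'(w)/\|\delta'_w\|\bigr),\quad i=1,2,$$
form a $2\times 2$ matrix whose determinant is bounded away from $0$ uniformly in $w$. The two resulting scalar inequalities $|a_iL_1 + b_iL_2|\le H(z)$, applied at $w=\varphi(z)$, will then invert to give $|L_1|,|L_2|\lesssim H(z)$ pointwise, whence Lemmas~\ref{le1}--\ref{le2} extract each of the stated integrals. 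In case (3) I would obtain $\phi\in D_\beta^q$ even more directly by testing against $f\equiv 1$, which gives $|\phi'(z)|\le H(z)$ almost everywhere.

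The test functions I have in mind are the standard kernels $f_w(z)=c\,(1-|w|^2)^{a}/(1-\bar wz)^{b}$, with $(a,b)$ tuned in each regime (and with a logarithmic correction factor in case (2), in the spirit of the extremal constructions of \cite{GKZ}) to give unit $D_\alpha^p$-norm and to realize both $|f_w(w)|\asymp\|\delta_w\|$ and $|f_w'(w)|\asymp\|\delta'_w\|$; a second test function will arise by replacing $b$ with a distinct admissible exponent $b'$, producing a non-collinear normalized evaluation pair. I expect the main obstacle to be case~(2), where the non-power logarithmic asymptotic of $\|\delta_w\|$ forces a delicate calibration of both the exponents and the logarithmic correction so as to maintain $\|f_w\|_{D_\alpha^p}\asymp 1$ while keeping the determinant of the normalized-evaluation matrix bounded below; once this calibration is in place, all remaining verifications are routine substitutions into the formulas of Lemmas~\ref{le1} and~\ref{le2}.
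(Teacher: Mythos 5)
Your strategy is essentially the one the paper follows: the upper bound comes from the product rule plus Lemmas \ref{le1} and \ref{le2}, and the converse comes from feeding kernel-type test functions $f_w(z)=c\,(1-|w|^2)^{a}(1-\bar wz)^{-b}$ (with a logarithmic variant when $p=\alpha+2$) into the order-boundedness inequality at $w=\varphi(z)$, using a second, non-collinear family to decouple the $\phi'\,f(\varphi)$ term from the $\phi\varphi'\,f'(\varphi)$ term. The paper's second test function $F_z$ is precisely a linear combination of two such kernels arranged so that $F_z(z)=0$, i.e.\ your $2\times2$ inversion in disguise. Your preliminary reduction of order boundedness to $H\in L^q(A_\beta)$ via separability is a tidier way of handling the fact that the exceptional null set may depend on $f$ than the paper's implicit diagonal evaluation, and is worth making explicit.

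There is, however, one concrete false step: the determinant of your normalized-evaluation matrix cannot be bounded away from zero uniformly in $w$ for kernels of the proposed form. Differentiating gives $f_w'(w)=c\,b\,\bar w\,(1-|w|^2)^{a-b-1}$, so every member of this family has derivative at $w$ carrying a factor $\bar w$; after normalizing by $\|\delta'_w\|$ both entries of the second column are $O(|w|)$, the claimed realization $|f_w'(w)|\asymp\|\delta'_w\|$ fails near $w=0$, and the determinant degenerates there. Hence the inversion yielding $|L_2|\lesssim H(z)$ breaks down on the set where $\varphi(z)$ is close to the origin. This is exactly why the paper splits into $|\varphi(z)|>1/2$, where the kernel argument applies because $|\overline{\varphi(z)}|\geq 1/2$, and $|\varphi(z)|\le 1/2$, where $\|\delta_w\|$ and $\|\delta'_w\|$ are both comparable to $1$ and the elementary test functions $1$ and $z$ already give $|\phi'(z)|\lesssim h(z)$ and $|\phi(z)\varphi'(z)|\lesssim h(z)$. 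Your argument needs the same case split, or else a genuinely different second test function such as $c\,(1-|w|^2)^{a}(z-w)(1-\bar wz)^{-b}$, which vanishes at $w$ but has derivative there comparable to $\|\delta'_w\|$ even at $w=0$. With that repair, the remaining substitutions into Lemmas \ref{le1} and \ref{le2} proceed as in the paper, including the logarithmic calibration in case (2).
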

\begin{proof}
(1)Suppose that
$$\int_{\mathbb{D}}\frac{|\phi(z)\varphi'(z)|^q}{(1-|\varphi(z)|^2)^{q(\alpha+2)/p}}dA_{\beta}(z)+\int_{\mathbb{D}}\frac{|\phi'(z)|^q}{(1-|\varphi(z)|^2)^{q(\alpha+2-p)/p}}dA_{\beta}(z)<\infty\,.$$
Let $f\in D_{\alpha}^p$ with $\|f\|_{D_{\alpha}^p}\leq1$, then by Lemma~\ref{le1} and Lemma~\ref{le2}, we have
\begin{equation}\begin{split}\nonumber
|(\phi(z)f(\varphi(z)))'|&\leq|\phi(z)\varphi'(z)f'(\varphi(z))|+|\phi'(z)f(\varphi(z))|\\
&\lesssim \frac{|\phi(z)\varphi'(z)|}{(1-|\varphi(z)|^2)^{(\alpha+2)/p}}+\frac{|\phi'(z)|}{(1-|\varphi(z)|^2)^{(\alpha+2-p)/p}}\,.
\end{split}\end{equation}
By taking
$$h(z)=\frac{|\phi(z)\varphi'(z)|}{(1-|\varphi(z)|^2)^{(\alpha+2)/p}}+\frac{|\phi'(z)|}{(1-|\varphi(z)|^2)^{(\alpha+2-p)/p}}\,,$$
then $h\in L^q(A_{\beta})$.
Accordingly, $W_{\phi,\varphi}:D_{\alpha}^p\rightarrow D_{\beta}^q$ is order bounded.

Conversely, assume that $W_{\phi,\varphi}:D_{\alpha}^p\rightarrow D_{\beta}^q$ is order bounded. Then there exists $h\in L^q(A_{\beta})$ such that for all $f\in D_{\alpha}^p$ with $\|f\|_{D_{\alpha}^p}\leq 1$, it holds that
$$|(W_{\phi,\varphi}f)'(z)|\leq h(z)\,, \quad \text{ a.e. } [A_{\beta}]\,.$$
For any $z\in\mathbb{D}$, we consider the function
$$f_z(\omega)=\frac{(1-|z|^2)^{(\alpha+2)/p}}{(1-\bar{z}\omega)^{2(\alpha+2)/p-1}}\,.$$
An easy calculation shows that
$\|f_z\|_{D_{\alpha}^p}\lesssim1$ for all $z\in \mathbb{D}$ and
$$f'_z(\omega)=\frac{\bar{z}(2(\alpha+2)-p)}{p}\frac{(1-|z|^2)^{(\alpha+2)/p}}{(1-\bar{z}\omega)^{2(\alpha+2)/p}},\, \omega\in\mathbb{D}.$$
Therefore,
\begin{equation}\begin{split}\nonumber
h(z)&\gtrsim |(W_{\phi,\varphi}f_{\varphi(z)})'(z)|=|\phi(z)\varphi'(z)f'_{\varphi(z)}(\varphi(z))+\phi'(z)f_{\varphi(z)}(\varphi(z))|\\
&=\left|\frac{(2(\alpha+2)-p)}{p}\frac{\phi(z)\overline{\varphi(z)}\varphi'(z)}{(1-|\varphi(z)|^2)^{(\alpha+2)/p}}+\frac{\phi'(z)}{(1-|\varphi(z)|^2)^{(\alpha+2-p)/p}}\right|\\
&\geq\left|\frac{\phi'(z)}{(1-|\varphi(z)|^2)^{(\alpha+2-p)/p}}\right|-\frac{(2(\alpha+2)-p)}{p}\left|\frac{\phi(z)\overline{\varphi(z)}\varphi'(z)}{(1-|\varphi(z)|^2)^{(\alpha+2)/p}}\right|\,,
\end{split}\end{equation}
$\text{ a.e. } [A_{\beta}]\,.$
That is,
$$\left|\frac{\phi'(z)}{(1-|\varphi(z)|^2)^{(\alpha+2-p)/p}}\right|\lesssim h(z)+\left|\frac{\phi(z)\overline{\varphi(z)}\varphi'(z)}{(1-|\varphi(z)|^2)^{(\alpha+2)/p}}\right|\,,$$
$\text{ a.e. } [A_{\beta}]\,.$
Thus, it suffices to prove that
$$\left|\frac{\phi(z)\varphi'(z)}{(1-|\varphi(z)|^2)^{(\alpha+2)/p}}\right|\lesssim h(z)\,,$$
$\text{ a.e. } [A_{\beta}]\,.$

For any $z\in\mathbb{D}$, we consider the function
$$F_z(\omega)=\frac{(1-|z|^2)^{(\alpha+2)/p}}{(1-\bar{z}\omega)^{2(\alpha+2)/p-1}}-\frac{(1-|z|^2)^{(\alpha+2)/p+1}}{(1-\bar{z}\omega)^{2(\alpha+2)/p}}\,,\quad \omega\in\mathbb{D}\,.$$
Then it is obvious that
$\|F_z\|_{D_{\alpha}^p}\lesssim1$ for all $z\in \mathbb{D}$ and
$$F'_z(\omega)=\bar{z}\left(\frac{2(\alpha+2)-p}{p}\frac{(1-|z|^2)^{(\alpha+2)/p}}{(1-\bar{z}\omega)^{2(\alpha+2)/p}}-\frac{2(\alpha+2)}{p}\frac{(1-|z|^2)^{(\alpha+2)/p+1}}{(1-\bar{z}\omega)^{2(\alpha+2)/p+1}}\right),\, \omega\in\mathbb{D}.$$
Thus, we have $F_z(z)=0$ and $F'_z(z)=\frac{-\bar{z}}{(1-|z|^2)^{(\alpha+2)/p}}$\,.
Therefore,
\begin{equation}\begin{split}\nonumber
h(z)&\gtrsim |(W_{\phi,\varphi}F_{\varphi(z)})'(z)|\\
&=|\phi(z)\varphi'(z)F'_{\varphi(z)}(\varphi(z))+\phi'(z)F_{\varphi(z)}(\varphi(z))|\\
&=\left|\frac{(2(\alpha+2)-p)}{p}\frac{\phi(z)\overline{\varphi(z)}\varphi'(z)}{(1-|\varphi(z)|^2)^{(\alpha+2)/p}}\right|\,,
\end{split}\end{equation}
$\text{ a.e. } [A_{\beta}]\,.$
Thus, for $|\varphi(z)|>1/2$, it holds that
$$\left|\frac{\phi(z)\varphi'(z)}{(1-|\varphi(z)|^2)^{(\alpha+2)/p}}\right|\lesssim h(z)\,,$$
$\text{ a.e. } [A_{\beta}]\,.$

For $|\varphi(z)|\leq1/2$, it follows from the continuity of the function $\frac{1}{(1-|z|^2)^{(\alpha+2)/p}}$ in $\mathbb{D}$ that
$$\frac{1}{(1-|\varphi(z)|^2)^{(\alpha+2)/p}}\lesssim1\,.$$

Now, by taking the constant function $1$ and the monomial $z$ as the test function in $D_{\alpha}^p$, we get that
$$|\phi'(z)|\lesssim h(z)\quad\text{ a.e. } [A_{\beta}],$$
and
$$|\phi'(z)z+\phi(z)\varphi'(z)|\lesssim h(z)\quad\text{ a.e. } [A_{\beta}]\,.$$
Therefore,
$$|\phi(z)\varphi'(z)|\lesssim h(z)\quad\text{ a.e. } [A_{\beta}]\,.$$
Thus, for $|\varphi(z)|\leq1/2$, it also holds that
$$\frac{|\phi(z)\varphi'(z)|}{(1-|z|^2)^{(\alpha+2)/p}}\lesssim h(z)\,, \quad \text{ a.e. } [A_{\beta}]\,.$$

In conclusion, for all $z\in\mathbb{D}$,
$$\frac{|\phi(z)\varphi'(z)|}{(1-|z|^2)^{(\alpha+2)/p}}\lesssim h(z)\,, \quad \text{ a.e. } [A_{\beta}]\,,$$
which implies that
$$\int_{\mathbb{D}}\frac{|\phi(z)\varphi'(z)|^q}{(1-|z|^2)^{q(\alpha+2)/p}}dA_{\beta}(z)<\infty\,.$$
It also holds that
$$\int_{\mathbb{D}}\frac{|\phi'(z)|^q}{(1-|z|^2)^{q(\alpha+2-p)/p}}dA_{\beta}(z)<\infty\,,$$
since
$$\left|\frac{\phi'(z)}{(1-|\varphi(z)|^2)^{(\alpha+2-p)/p}}\right|\lesssim h(z)+\left|\frac{\phi(z)\overline{\varphi(z)}\varphi'(z)}{(1-|\varphi(z)|^2)^{(\alpha+2)/p}}\right|\,,$$
$\text{ a.e. } [A_{\beta}]\,.$
Accordingly, we complete the proof of (1).

(2)The proof of (2) are similar to that of (1) by some minor modifications. For example, we take the test functions
$$f_z(\omega)=\frac{\log(\frac{2}{1-\bar{z}\omega})}{\log(\frac{2}{1-|z|^2})^{1/p}}\,,\quad \omega\in\mathbb{D}\,,$$
and
$$F_z(\omega)=\frac{\log(\frac{2}{1-\bar{z}\omega})}{\log(\frac{2}{1-|z|^2})^{1/p}}-\frac{\left(\log(\frac{2}{1-\bar{z}\omega})\right)^2}{\log(\frac{2}{1-|z|^2})^{1/p+1}}\,,\quad \omega\in\mathbb{D}\,.$$

(3)For $p>\alpha+2$, the proof is also similar to that of (1) by some minor modifications.
Thus we omit it.
\end{proof}

Choosing $\alpha=p-1$ and $\beta=q-1$ in Theorem~\ref{th1}, we obtain the result originally proven in \cite{GKZ,AS}.
\begin{corollary}\label{cor1}
Let $0<p,q<\infty$, $\phi\in H(\mathbb D)$ and $\varphi$ is an analytic self-map of $\mathbb D$\,. Then $W_{\phi,\varphi}:D_{p-1}^p\rightarrow D_{q-1}^q$ is order bounded if and only if
$$\int_{\mathbb{D}}\frac{|\phi(z)\varphi'(z)|^q}{(1-|\varphi(z)|^2)^{q(p+1)/p}}dA_{\beta}(z)+\int_{\mathbb{D}}\frac{|\phi'(z)|^q}{(1-|\varphi(z)|^2)^{q/p}}dA_{\beta}(z)<\infty\,.$$
\end{corollary}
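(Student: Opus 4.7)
The plan is to observe that Corollary~\ref{cor1} is an immediate specialization of part (1) of Theorem~\ref{th1}, so the only work is to check that we land in the correct case and then rewrite the exponents cleanly.

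First I would verify the case distinction. With the substitution $\alpha = p-1$ and $\beta = q-1$ we have $\alpha + 2 = p + 1 > p$, so the hypothesis $p < \alpha + 2$ of Theorem~\ref{th1}(1) is automatically satisfied for every $0 < p < \infty$. In particular, parts (2) and (3) of the theorem cannot occur, and we do not need to split into subcases.

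Next I would apply Theorem~\ref{th1}(1) directly. Computing the exponents that appear in the characterization,
\[ \frac{q(\alpha+2)}{p} = \frac{q(p+1)}{p}, \qquad \frac{q(\alpha+2-p)}{p} = \frac{q}{p}, \]
we see that the two integrals in Theorem~\ref{th1}(1) become exactly the two integrals in the statement of Corollary~\ref{cor1}, with $dA_\beta = dA_{q-1}$.

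There is essentially no obstacle here; the content of the corollary is already contained in Theorem~\ref{th1}, and the proof reduces to the two lines of exponent arithmetic above. The only thing worth flagging for the reader is that the restriction $p < \alpha+2$ is automatic in the symmetric weighting $\alpha = p-1$, which is why the characterization takes a single clean form (matching the one previously obtained in \cite{GKZ,AS}) without any case analysis.
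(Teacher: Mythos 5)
Your proposal is correct and follows exactly the paper's own route: the paper likewise obtains the corollary by setting $\alpha=p-1$ and $\beta=q-1$ in Theorem~\ref{th1}, and your verification that $p<\alpha+2=p+1$ always holds, together with the exponent computations $q(\alpha+2)/p=q(p+1)/p$ and $q(\alpha+2-p)/p=q/p$, is precisely the (implicit) content of that one-line deduction. Nothing is missing.
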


For $\gamma\in\mathbb{R}$, the weighted Hardy space $H^2_{\gamma}$ is a Hilbert space of analytic functions $f(z)=\sum^{\infty}_{n=0}a_nz^n$, defined in $\mathbb{D}$, such that
$$\|f\|^2_{H^2_{\gamma}}=\sum^{\infty}_{n=0}(n+1)^{\gamma}|a_n|^2<\infty\,.$$
Clearly, the functions $e_{\gamma,n}(z):=\frac{z^n}{(n+1)^{\gamma/2}}\,, n=0,1,2,3,\ldots$ constitute the orthonormal basis for the weighted Hardy space $H^2_{\gamma}$\,. Jarchow and Riedl \cite{JR} proved that for $\gamma>0$, $C_\varphi: H^2_{1-\gamma}\to H^2$ is Hilbert-Schmidt if and only if $C_\varphi: H^p\to L^{p\gamma}(m)$ is order bounded for every $p\geq1$\,. This result was extended to the setting of weighted Bergman spaces by Hibschweiler in \cite{RH}, where it was shown that, under the assumption of the boundary values $|\varphi^*(e^{i\theta})|<1$ a.e. $[m]$, for $-1<\alpha$, $0<\beta$ and $\gamma=(\alpha+2)\beta$, it holds that $C_\varphi: H^2_{1-\gamma}\to H^2$ is Hilbert-Schmidt if and only if $C_\varphi: H^p\to L^{p\beta}(m)$ is order bounded for every $p\geq1$\,.

It is known (see \cite{ST}) that an linear operator $T: H^2_{1-\gamma}\to H^2$ is Hilbert-Schmidt if and only if
$$\sum^{\infty}_{n=0}\|T(e_{\gamma,n})\|^2_{H^2}<\infty\,.$$
For any $a>0$, let $\frac{1}{(1-z)^a}=\sum^{\infty}_{n=0}A_n(a)z^n$ for $z\in\mathbb{D}$. Then by Stirling's formula, $A_n(a)\thickapprox(n+1)^{a-1}$ as $n\to\infty$\,. Now we prove the following theorem.
\begin{theorem}\label{th2}
Let $k\in\mathbb{N},1\leq p<\alpha+2$, $\phi\in L^{2k}(m)$ and $|\varphi^*(e^{i\theta})|<1$ a.e. $[m]$. Denote $1-\gamma=2k(\alpha+2-p)/p$, then $W_{\phi,\varphi}:D_{\alpha}^p\rightarrow L^{2k}(m)$ is order bounded if and only if $W_{\phi^k,\varphi}:H^2_{1-\gamma}\rightarrow H^2$ is Hilbert-Schmidt.
\end{theorem}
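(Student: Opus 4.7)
The natural strategy is to reduce both the order boundedness condition and the Hilbert--Schmidt condition to the same explicit integrability criterion on $\partial\mathbb{D}$, and then observe that they coincide. This parallels the Jarchow--Riedl and Hibschweiler results mentioned just before the theorem, with the derivative Hardy/Dirichlet ingredients plugged in via Lemma~\ref{le1}.

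First I would translate order boundedness into a concrete integral condition. Because $|\varphi^{*}(e^{i\theta})|<1$ almost everywhere, each $f\in D_{\alpha}^{p}$ has a well-defined boundary composition $f\circ\varphi^{*}$, and $W_{\phi,\varphi}f=\phi\cdot(f\circ\varphi^{*})$ as an $L^{2k}(m)$ function. A standard argument using a countable dense subset of the unit ball of $D_{\alpha}^{p}$ (separability) shows that $W_{\phi,\varphi}:D_{\alpha}^{p}\to L^{2k}(m)$ is order bounded if and only if the pointwise supremum
$$\xi\mapsto |\phi(\xi)|\cdot\sup_{\|f\|_{D_{\alpha}^{p}}\leq 1}|f(\varphi^{*}(\xi))|=|\phi(\xi)|\cdot\|\delta_{\varphi^{*}(\xi)}\|$$
belongs to $L^{2k}(m)$. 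Since $1\leq p<\alpha+2$, Lemma~\ref{le1}(1) gives $\|\delta_{z}\|\approx(1-|z|^{2})^{-(\alpha+2-p)/p}$, so order boundedness is equivalent to
$$\int_{\partial\mathbb{D}}\frac{|\phi(\xi)|^{2k}}{(1-|\varphi^{*}(\xi)|^{2})^{2k(\alpha+2-p)/p}}\,dm(\xi)<\infty,$$
which, using the definition $1-\gamma=2k(\alpha+2-p)/p$, is exactly the integrability of $|\phi|^{2k}(1-|\varphi^{*}|^{2})^{-(1-\gamma)}$.

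Next I would compute the Hilbert--Schmidt norm of $W_{\phi^{k},\varphi}:H_{1-\gamma}^{2}\to H^{2}$ using the orthonormal basis $\{e_{1-\gamma,n}\}$ described in the excerpt. Since $W_{\phi^{k},\varphi}(e_{1-\gamma,n})(z)=\phi^{k}(z)\varphi(z)^{n}/(n+1)^{(1-\gamma)/2}$, one has
$$\|W_{\phi^{k},\varphi}(e_{1-\gamma,n})\|_{H^{2}}^{2}=\frac{1}{(n+1)^{1-\gamma}}\int_{\partial\mathbb{D}}|\phi|^{2k}|\varphi^{*}|^{2n}\,dm.$$
Summing over $n$ and applying Tonelli's theorem (all terms are nonnegative) then yields
$$\sum_{n=0}^{\infty}\|W_{\phi^{k},\varphi}(e_{1-\gamma,n})\|_{H^{2}}^{2}=\int_{\partial\mathbb{D}}|\phi(\xi)|^{2k}\sum_{n=0}^{\infty}\frac{|\varphi^{*}(\xi)|^{2n}}{(n+1)^{1-\gamma}}\,dm(\xi).$$

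The final step is to identify the inner series with a power of $1-|\varphi^{*}|^{2}$. Appealing to the power series identity $(1-z)^{-a}=\sum_{n}A_{n}(a)z^{n}$ and the Stirling asymptotic $A_{n}(a)\approx(n+1)^{a-1}$ recorded just before the theorem, with an appropriate choice of the parameter $a$, the inner series is comparable to $(1-|\varphi^{*}|^{2})^{-(1-\gamma)}$ on the set where $|\varphi^{*}|$ is near $1$, and uniformly bounded where $|\varphi^{*}|$ stays away from $1$. This identifies the Hilbert--Schmidt condition with the integral criterion obtained in the first step, completing the equivalence.

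The main obstacles are: (i) justifying the measurable selection of the dominating function in the order boundedness reduction, which requires the $L^{2k}(m)$-measurability of $\xi\mapsto\|\delta_{\varphi^{*}(\xi)}\|$ and relies on the continuity of $z\mapsto\|\delta_{z}\|$ together with the measurability of $\varphi^{*}$; and (ii) making the two-sided comparison of the series to $(1-|\varphi^{*}|^{2})^{-(1-\gamma)}$ integrable against $|\phi|^{2k}\,dm$ uniformly in $\xi$, for which the hypothesis $|\varphi^{*}|<1$ a.e.\ is essential to guarantee that the series converges pointwise and that the Stirling comparison has content.
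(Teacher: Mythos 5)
Your overall strategy is exactly the paper's (and Jarchow--Riedl's): reduce order boundedness to the integral condition $\int_{\partial\mathbb{D}}|\phi|^{2k}(1-|\varphi^*|^2)^{-2k(\alpha+2-p)/p}\,dm<\infty$ via the point-evaluation estimate of Lemma~\ref{le1}(1), compute the Hilbert--Schmidt sum over the monomial basis, interchange sum and integral by Tonelli, and match the two conditions through the binomial series and Stirling's formula. The paper simply cites \cite{GKZ} for the first reduction, whereas you sketch it directly; that part, together with your measurability remarks, is fine.

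There is, however, a genuine error at your final identification step. The inner series you arrive at is
$$\sum_{n=0}^{\infty}\frac{x^{n}}{(n+1)^{1-\gamma}}=\sum_{n=0}^{\infty}(n+1)^{\gamma-1}x^{n}\approx\sum_{n=0}^{\infty}A_{n}(\gamma)\,x^{n}=\frac{1}{(1-x)^{\gamma}}\,,$$
because matching $A_n(a)\approx(n+1)^{a-1}$ to the coefficient $(n+1)^{\gamma-1}$ forces $a=\gamma$, not $a=1-\gamma$. Hence the Hilbert--Schmidt condition computed with the orthonormal basis $z^n/(n+1)^{(1-\gamma)/2}$ of the domain $H^2_{1-\gamma}$ is $\int_{\partial\mathbb{D}}|\phi|^{2k}(1-|\varphi^*|^2)^{-\gamma}\,dm<\infty$, which coincides with your order-boundedness criterion $\int_{\partial\mathbb{D}}|\phi|^{2k}(1-|\varphi^*|^2)^{-(1-\gamma)}\,dm<\infty$ only when $\gamma=1-\gamma$; your assertion that the series is comparable to $(1-x)^{-(1-\gamma)}$ is exactly where the argument breaks (and when $\gamma\le 0$ the series is even bounded while $(1-x)^{-(1-\gamma)}$ blows up). Be aware that the paper's own proof sidesteps this only by summing $\|W_{\phi^k,\varphi}(e_{\gamma,n})\|^2_{H^2}$ over the functions $e_{\gamma,n}=z^n/(n+1)^{\gamma/2}$, which are not an orthonormal basis of $H^2_{1-\gamma}$; with the Jarchow--Riedl normalization the bookkeeping comes out consistent if one sets $\gamma=2k(\alpha+2-p)/p$ rather than $1-\gamma=2k(\alpha+2-p)/p$. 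Either way, as written your two conditions do not coincide, so the exponent accounting in this last step must be redone before the equivalence is established.
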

\begin{proof}
By \cite{GKZ}, we see that $W_{\phi,\varphi}:D_{\alpha}^p\rightarrow L^{2k}(m)$ is order bounded if and only if
$$\frac{\phi}{(1-|\varphi^*|^2)^{(\alpha+2-p)/p}}\in L^{2k}(m)\,,$$
or equivalently,
$$\int_0^{2\pi}|\phi|^{2k}\sum_{n=0}^\infty A_n(1-\gamma)|\varphi^*|^{2n}dm<\infty\,,$$
or equivalently, by Stirling's formula,
$$\sum_{n=0}^\infty\|W_{\phi^k,\varphi}(e_{\gamma,n})\|^2_{H^2}<\infty\,.$$
The above inequality is equivalent to that $W_{\phi^k,\varphi}:H^2_{1-\gamma}\rightarrow H^2$ is Hilbert-Schmidt.

\end{proof}

\section{\bf Order boundedness of weighted composition operators on derivative Hardy spaces}
Recall that in this case, all the discussions are under the assumption of the boundary values $|\varphi^*(e^{i\theta})|<1$ a.e. $[m]$. The weighted composition operator $W_{\phi,\varphi}:S^p\rightarrow S^q$ is order bounded if and only if there exists $g\in L^q(m)$ such that for all $f\in S^p$ with $\|f\|_{S^p}\leq 1$, it holds that
$$|(W_{\phi,\varphi}f)'(e^{i\theta})|\leq g(e^{i\theta})\,, \quad \text{ a.e. } [m]\,.$$

Before proving the main results, we first give an auxiliary lemma.

\begin{lemma}\label{le3}
Let $0<p<\infty$. Denote $\delta_z$ and $\delta'_z$ as the point evaluation functional and derivative point evaluation functional on $S^p$, respectively, then for $z\in\mathbb{D}$, it holds that

(1) for $0<p<1$, $\|\delta_z\|\approx \frac{1}{(1-|z|^2)^{1/p-1}}$\,;

(2) for $1\leq p<\infty$, $\|\delta_z\|\approx 1$\,;

(3) for $0<p<\infty$, $\|\delta'_z\|\approx \frac{1}{(1-|z|^2)^{1/p}}$.
\end{lemma}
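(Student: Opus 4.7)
The plan is to reduce everything to known Hardy-space estimates, exploiting the identity $\|f\|_{S^p}=|f(0)|+\|f'\|_{H^p}$. After this reduction, the derivative point evaluation on $S^p$ is literally the point evaluation of an $H^p$ function, while the point evaluation on $S^p$ becomes the size of the primitive of an $H^p$ function along a radius.

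Part (3) is the most direct. Since $\|f'\|_{H^p}\le\|f\|_{S^p}$, the classical bound $|g(z)|\lesssim\|g\|_{H^p}(1-|z|^2)^{-1/p}$ applied to $g=f'$ yields $\|\delta_z'\|\lesssim(1-|z|^2)^{-1/p}$. For the reverse inequality I would fix an extremal $g_z\in H^p$ with $\|g_z\|_{H^p}\approx 1$ and $|g_z(z)|\approx(1-|z|^2)^{-1/p}$, and set $f_z(\omega)=\int_0^\omega g_z(\zeta)\,d\zeta$; then $\|f_z\|_{S^p}=\|g_z\|_{H^p}\approx 1$ and $|f_z'(z)|\approx(1-|z|^2)^{-1/p}$, producing the matching lower bound.

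For the upper bounds in (1) and (2), I would write $f(z)=f(0)+\int_0^{|z|}f'(re^{i\arg z})e^{i\arg z}\,dr$ and invoke the same $H^p$ pointwise estimate on $f'$. For $p>1$ the integral $\int_0^{|z|}(1-r)^{-1/p}\,dr$ is uniformly bounded, which immediately yields $\|\delta_z\|\lesssim 1$. For $0<p<1$ the same integral equals, up to constants, $(1-|z|)^{1-1/p}$, producing the asserted growth $(1-|z|^2)^{-(1/p-1)}$. The borderline case $p=1$ is the subtlest point and I expect it to be the main obstacle: the naive integration gives only a logarithmic blow-up. The fix is to invoke Hardy's inequality $\sum_{n\ge 0}|b_n|/(n+1)\lesssim\|g\|_{H^1}$ applied to the Taylor coefficients of $f'$, from which the Taylor series of $f$ is seen to be absolutely convergent with $\|f\|_\infty\lesssim\|f\|_{S^1}$.

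For the matching lower bounds in (1) and (2): when $1\le p<\infty$, the constant function $f\equiv 1$ has unit $S^p$-norm and $|f(z)|=1$, giving $\|\delta_z\|\gtrsim 1$. When $0<p<1$ I would test with the primitive of the $H^p$-extremal $(1-|z|^2)^{1/p}(1-\bar z\zeta)^{-b}$ for a suitable exponent $b>1/p$; evaluating this primitive at $\omega=z$ yields a value of order $(1-|z|^2)^{1-1/p}$ against an $S^p$-norm comparable to $1$, which completes the proof.
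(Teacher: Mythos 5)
Your proposal is correct and follows essentially the same route as the paper: both reduce everything to the classical $H^p$ point-evaluation estimate via $\|f\|_{S^p}=|f(0)|+\|f'\|_{H^p}$, and your lower-bound test functions (the primitive of $(1-|z|^2)^{1/p}(1-\bar z\zeta)^{-2/p}$, with $b=2/p$ being the right choice of exponent) coincide up to constants with the paper's $f_z(\omega)=(1-|z|^2)^{1/p}(1-\bar z\omega)^{-(2/p-1)}$. The only difference is that you prove the upper bounds in (1) and (2) directly --- radial integration of the $H^p$ pointwise estimate, plus Hardy's inequality to handle the logarithmic failure at the endpoint $p=1$ --- whereas the paper outsources exactly these facts to citations of \cite{LIN}; your argument is the standard proof of the cited results, so the mathematics is the same, just more self-contained.
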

\begin{proof}
(1) Let $0<p<1$. From the proof in \cite[Proposition~1]{LIN}, we see that for $z\in\mathbb{D}$,
$$|f(z)|\lesssim \frac{\|f\|_{S^p}}{(1-|z|^2)^{1/p-1}}\,.$$
This yields the right inequality. For the left inequality, let
$$f_z(\omega)=\frac{(1-|z|^2)^{1/p}}{(1-\bar{z}\omega)^{2/p-1}}\,,\quad \omega\in\mathbb{D}\,.$$
A standard argument shows that $\|f_z\|_{S^p}\lesssim1$ for $z\in\mathbb{D}$\,. Hence,
$$\|\delta_z\|\geq\frac{|f_z|(z)}{\|f_z\|_{S^p}}\gtrsim\frac{1}{(1-|z|^2)^{1/p-1}}\,.$$

(2) follows directly from the fact that $S^p\subset H^\infty$ for $1\leq p<\infty$ (see \cite{LIN})\,.

(3) Since $f\in S^p$ if and only if $f'\in H^p$, this follows from the estimate for the point evaluation functional on $H^p$ (see \cite{DUREN} or \cite{HJ}).
\end{proof}

The following theorem completely characterizes the order boundedness of weighted composition operators $W_{\phi,\varphi}:S^p\rightarrow S^q$\,. Note that for $1\leq p<\infty$, $S^p$ is contained in the disk algebra $\mathcal{A}$ on $\mathbb{D}$ (see \cite{DUREN}), thus $\varphi^*(\xi)=\varphi(\xi)$ holds for any $\xi\in\mathbb{D}$\,.
\begin{theorem}\label{th3}
Let $0<p,q<\infty$, $\phi\in S^q$ and $\varphi\in S^q$ is an analytic self-map of $\mathbb D$\,. Then the following statements hold:\\
\phantom{(1)}(1) If $0<p<1$, then $W_{\phi,\varphi}:S^p\rightarrow S^q$ is order bounded if and only if
$$\int_{\partial\mathbb{D}}\frac{|\phi(\xi)\varphi'(\xi)|^q}{(1-|\varphi(\xi)|^2)^{q/p}}dm(\xi)+\int_{\partial\mathbb{D}}\frac{|\phi'(\xi)|^q}{(1-|\varphi(\xi)|^2)^{q(1-p)/p}}dm(\xi)<\infty\,;$$
\phantom{(1)}(2) If $1\leq p<\infty$, then $W_{\phi,\varphi}:S^p\rightarrow S^q$ is order bounded if and only if
$$\int_{\partial\mathbb{D}}\frac{|\phi(\xi)\varphi'(\xi)|^q}{(1-|\varphi(\xi)|^2)^{q/p}}dm(\xi)<\infty\,.$$
\end{theorem}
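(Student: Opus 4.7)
The plan is to adapt the machinery already developed for Theorem~\ref{th1}, replacing area integration against $dA_\beta$ with boundary integration against $dm$ and using Lemma~\ref{le3} in place of Lemmas~\ref{le1}--\ref{le2}. The product rule gives
\[
|(W_{\phi,\varphi}f)'(\xi)|\le|\phi(\xi)\varphi'(\xi)||f'(\varphi(\xi))|+|\phi'(\xi)||f(\varphi(\xi))|
\]
a.e.\ $[m]$, so sufficiency reduces to bounding $|f(\varphi(\xi))|$ and $|f'(\varphi(\xi))|$ uniformly over $\|f\|_{S^p}\le 1$ via Lemma~\ref{le3}. For case~(1), $0<p<1$, this yields exactly the integrand pair in the stated condition. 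For case~(2), $p\ge 1$, the bound $|f(\varphi(\xi))|\lesssim 1$ coming from $S^p\subset H^\infty$ collapses the second term to $|\phi'(\xi)|$, whose $q$-th power is $m$-integrable because $\phi\in S^q$ forces $\phi'\in H^q$; this is why only one integral appears in case~(2).

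For necessity I would, as in Theorem~\ref{th1}, build two families of test functions and substitute $z=\varphi(\xi)$. In case~(1), the natural first test function is
\[
f_z(\omega)=\frac{(1-|z|^2)^{1/p}}{(1-\bar z\omega)^{2/p-1}},
\]
whose $S^p$-norm is $\lesssim 1$ by the same computation as in the proof of Lemma~\ref{le3}(1); substituting into the order-boundedness inequality gives a bound that mixes $|\phi'(\xi)|/(1-|\varphi(\xi)|^2)^{(1-p)/p}$ and $|\phi(\xi)\varphi'(\xi)|/(1-|\varphi(\xi)|^2)^{1/p}$. To separate them I would follow Theorem~\ref{th1} and use
\[
F_z(\omega)=\frac{(1-|z|^2)^{1/p}}{(1-\bar z\omega)^{2/p-1}}-\frac{(1-|z|^2)^{1/p+1}}{(1-\bar z\omega)^{2/p}},
\]
which satisfies $F_z(z)=0$ and $F'_z(z)=-\bar z/(1-|z|^2)^{1/p}$, thereby isolating the $\phi\varphi'$ term; feeding this back into the inequality for $f_z$ yields the $\phi'$ estimate. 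In case~(2) the analogue is $\tilde F_z(\omega)=f_z(\omega)-f_z(z)$ with $f_z$ any normalized function having $|f'_z(z)|\approx(1-|z|^2)^{-1/p}$ (for instance the primitive of the normalized $H^p$ reproducing kernel at $z$), since only the derivative bound is needed there.

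To complete the $\xi$-by-$\xi$ estimates into integral ones I would separate the regions $|\varphi(\xi)|>1/2$ and $|\varphi(\xi)|\le 1/2$: on the former the weights $1/(1-|\varphi(\xi)|^2)^{\sigma}$ are comparable to the full expressions obtained from $F_z$ and $f_z$, while on the latter the weights are bounded, so testing against the constant~$1$ and the identity $\omega\mapsto\omega$ (both of $S^p$-norm $\lesssim 1$) yields $|\phi'(\xi)|\lesssim g(\xi)$ and $|\phi(\xi)\varphi'(\xi)|\lesssim g(\xi)$ a.e.~$[m]$. I expect the main obstacle to lie in verifying that the test functions $f_z$ and $F_z$ genuinely have $S^p$-norm bounded uniformly in $z$, since this requires the estimate $\int_{\partial\mathbb D}|1-\bar z\omega|^{-2}\,dm(\omega)\approx(1-|z|^2)^{-1}$ applied to $|F'_z|^p$ with $p<1$ and careful bookkeeping of exponents; once the norm bound is in hand, the rest reduces to the algebraic manipulations already modeled on Theorem~\ref{th1}.
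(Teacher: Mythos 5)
Your proposal follows essentially the same route as the paper: the paper's proof of Theorem~\ref{th3} simply transports the argument of Theorem~\ref{th1} using Lemma~\ref{le3} and the very test functions $f_z$ and $F_z$ you write down, with the same region splitting at $|\varphi(\xi)|=1/2$ and the same use of the constant and identity as test functions. Your additional remarks on case~(2) (using $S^p\subset H^\infty$ and $\phi'\in H^q$) fill in details the paper leaves implicit but do not change the approach.
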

\begin{proof}
The proof is similar to that of Theorem~\ref{th1} by using Lemma~\ref{le3} and some minor modifications. For example, in the proof of (1), we can take the test functions
$$f_z(\omega)=\frac{(1-|z|^2)^{1/p}}{(1-\bar{z}\omega)^{2/p-1}}\,,\quad \omega\in\mathbb{D}\,,$$
and
$$F_z(\omega)=\frac{(1-|z|^2)^{1/p}}{(1-\bar{z}\omega)^{2/p-1}}-\frac{(1-|z|^2)^{1/p+1}}{(1-\bar{z}\omega)^{2/p}}\,,\quad \omega\in\mathbb{D}\,.$$
Thus we omit it.
\end{proof}

\end{document}